\DeclareSymbolFont{AMSb}{U}{msb}{m}{n}
\DeclareSymbolFontAlphabet{\Bbb}{AMSb}
\newtheorem{theorem}{Theorem}[section]
\newtheorem{lemma}[theorem]{Lemma}
\newtheorem{proposition}[theorem]{Proposition}
\newcommand{\abs}[1]{\left\vert#1\right\vert}
\newcommand{\C}{\mathbb{C}}
\newcommand{\R}{\mathbb{R}}
\newcommand{\Z}{\mathbb{Z}}
\newcommand{\N}{\mathbb{N}}
\newcommand{\Q}{\mathbb{Q}}
\newcommand{\T}{\mathbb{T}}
\title{\bf Rotation numbers for planar attractors of equivariant homeomorphisms}
\author{B. Alarc\'on\thanks{Work sopported by grant
EX2009-0283 of the Ministerio de Educaci\'on, grants MICINN-08-MTM2008-06065 and MICINN-12-MTM2011-22956 of the Ministerio de Ciencia e 
Innovaci\'on and by European Regional Development Fund through the programme
COMPETE and from the Portuguese Government through the Fundac\~ao para a Ci\^encia e a Tecnologia (FCT) under the project
PEst-C/MAT/UI0144/2011.}}
\date{{}}
\begin{document}
\maketitle

\vspace{-0.5cm}

{\footnotesize
\centerline{University of Oviedo} \centerline{C/ Calvo Sotelo,
s/n} \centerline{PC: 33007 Oviedo; SPAIN} \centerline{E-mail
address: alarconbegona@uniovi.es} }
\medskip

\begin{abstract}

Given an integer $m>1$ we consider $\Z_m-$~equivariant and orientation preserving
homeomorphisms in $\R^2$ with an asymptotically stable fixed
point at the origin. We present examples without periodic points and having some complicated dynamical features.
The key is a preliminary construction of  $\Z_m-$~equivariant Denjoy maps of the circle.
\end{abstract}

\section{Introduction}

This work is motivated by the study of the global behavior of a planar map having a fixed
point which is asymptotically stable but is not a global attractor. In \cite{irrationalRotNumer}, the authors 
show that it can happen even when there is no periodic points different 
from the fixed point. Actually, they construct examples of planar dissipative homeomorphisms $f$ such that the set $Rec(f)\setminus \{p\}$ is a 
Cantor set with almost automorphic dynamics, being $Rec(f)$ the set of recurrent points of $f$ and $p$ 
the asymptotically stable fixed point. Besides, they show that this behaviour is strongly related to the fact that these 
planar attractors have irrational rotation number.\par

As we are interested in systems with symmetry, in this work we construct symmetric and orientation preserving
dissipative homeomorphisms in the plane with an asymptotically
stable fixed point and irrational rotation number. These examples have no periodic points different from the fixed point and 
also present a complicated dynamical features. This constructions is based on the dissipative homeomorphisms given in
\cite{irrationalRotNumer},\par 

Let $f:\R^2 \rightarrow \R^2$ be an orientation preserving
homeomorphism with a fixed point which is not a global attractor
and its basin of attraction is unbounded. In that case, Theory of
Prime Ends due to Carath\'eodory is applied and $f$ induces an
orientation preserving homeomorphism $f^{*}$ in the space of prime
ends. Since this space is homeomorphic to the circle, it is
possible to associate a rotation number to $f$ being the rotation
number of $f^{*}$. \par
\medskip
The authors prove in \cite{irrationalRotNumer} that if $f$ is
dissipative and has an irrational rotation number, then the
induced map in the space of prime ends is always conjugated to a
Denjoy map. Otherwise, they prove in \cite{rationalRotNumer} that
periodic orbits different from fixed points can appear when the
rotation number is rational.
\medskip

Given a Lie group $\Gamma$ acting on $\R^2$, a map $f:\R^2 \rightarrow \R^2$ is
said to be $\Gamma-$\textit{equivariant} (or a $\Gamma-$symmetric
map) if for all $x\in \R^2$ and $\sigma \in \Gamma$

$$f(\sigma x)=\sigma f(x).$$

In \cite{ACL1} the authors prove that if the group $\Gamma$ is
$SO(2)$ or contains any reflection, the local dynamics of the
asymptotically stable fixed point implies global dynamics. That work
shows that the symmetry forces the global attraction to arise from
an asymptotically stable fixed point in all cases except when the map is
$\Z_m-$equivariant.
\medskip

In \cite{ACL2} are constructed $\Z_m-$equivariant homeomorphisms
of the plane  with an asymptotically stable fixed point and having
periodic points of period $m$ and rotation number $1/m$. So we
might be led to think that the presence of the $\Z_m-$~symmetry
implies that the rotation number of a homeomorphism should be
rational. In this article we give examples which show that this is
false. We prove the existence of $\Z_m-$~equivariant
homeomorphisms with an asymptotically stable fixed point such that the induced map in the space of prime
ends is conjugated to a Denjoy map, which is also
$\Z_m-$~equivariant. The idea is to reproduce the construction
given in \cite{irrationalRotNumer} in the context of symmetry.
\medskip

This work is organized as follows: In Section \ref{secNotation} we
explain some notations and results of Denjoy maps in the circle
that will be used. In Section \ref{secZm} we explain the problem
in the context of symmetry and construct $\Z_m-$equivariant Denjoy
maps in the circle. In section \ref{secZnR2} we prove the
existence of homeomorphisms of the plane which induce a symmetric
Denjoy map in the space of prime ends with the help of some results in
\cite{irrationalRotNumer} and Section \ref{secZm}.

%

\section{Notation and Denjoy map in the circle} \label{secNotation}

We introduce the same notation as in \cite{irrationalRotNumer}:
\medskip

We consider the quotient space $\T=\R/\Z$ and points $\bar
\theta=\theta + \Z$, with $\theta \in \R$. Although all figures
are sketched  on the unit circle ${\mathbb{S}}^1=\{z\in \C :
\abs{z}=1 \}$, which is homeomorphic to $\T$.
\medskip

The \textit{distance between two points}  $\bar \theta_1, \bar
\theta_2 \in \T$ is $$dist_\T(\bar \theta_1, \bar
\theta_2)=dist_\R (\theta_1-\theta_2, \Z)$$ where $dist_\R$
indicates the distance from a point to a set on the real line.
\medskip

A \textit{closed counter-clockwise arc} in $\T$ from $p$ to $q$,
$p\neq q$, will be denoted by $\alpha=\widehat{pq}$ and by $\dot
\alpha$ its corresponding \textit{open arc}.
\medskip

We define the \textit{cyclic order} as follows: Given three
different points $p_0, p_1, p_2 \in {\mathbb{T}}$ we say that $p_0
\prec p_1 \prec p_2$  if $p_1 \in \widehat{p_0p_2}$.
\medskip

We define the \textit{cyclic order for arcs} as follows: Given
three pairwise-disjoint arcs $\alpha_0, \alpha_1, \alpha_2 \subset
{\mathbb{T}}$, we say that $\alpha_0 \prec \alpha_1 \prec
\alpha_2$ if $p_0\prec p_1\prec p_2$ for some $\;p_0 \in \alpha_0,
\;p_1 \in \alpha_1, \;p_2 \in \alpha_2$.
\medskip

A \textit{Cantor set} $C$ is any compact totally disconnected
perfect subset of $\T$ (see \cite{topo}). $C$ may be considered as
$$C=\T \setminus \bigcup_{k=0}^{\infty} \; \dot{\alpha}_{k},$$
where $\{\alpha_k\}_{k\geq 0}$ is a family of pairwise disjoint
closed arcs in $\T$. The set of \textit{accessible} and
\textit{inaccessible points} will be denoted by $A$ and $I$,
respectively. The set $A$  is composed by the end points of all
$\alpha_k$, thus $$I=\T\setminus \bigcup_{k=0}^{\infty} \;
\alpha_{k}.$$
\medskip

Using $C$ we define an equivalence relation on $\T$ by putting
$\bar \theta_1 \sim \bar \theta_2$ (mod $C$) if  $\bar
\theta_1=\bar \theta_2 \quad \text{or} \quad \bar \theta_1, \bar
\theta_2 \in \alpha_k \quad \text{for some} \quad k\geq 0$. Thus,\textit{}
the \textit{Cantor function} associated to $C$ is a continuous
function $\mathcal{P}: \T \to \T$ such that
$$\mathcal{P}(\bar \theta_1)= \mathcal{P}(\bar \theta_2)\; \Leftrightarrow \; \bar \theta_1 \sim \bar \theta_2. $$

The intuitive idea of this type of maps is to collapse every arc
$\alpha_k$ into a point in such a way that the cyclic order is preserved.
See \cite{irrationalRotNumer} and \cite{robinson} for more details.



The Cantor function $\mathcal P$ is onto and $\mathcal{P}(A)$ is a
countable and dense subset of $\T$. See \cite{Markley} for more
details.

\medskip

The \textit{rotation} $R_{\bar \eta}:\T \to \T$ is defined by
$R_{\bar \eta}(\bar \theta)=\overline{\theta + \eta}$, where $\eta,
\theta \in \R$. Given a homeomorphism $f$ of $\T$, the \textit{$f-$~orbit} starting at a point $\bar\theta \in \T$ is 
denoted by $\mathcal{O}(\bar\theta)$. The \textit{$\omega-$~limit} of a point $\bar\theta \in \T$ is denoted by $\omega(\bar\theta)$. It
is well known (see \cite{robinson}) that an orientation preserving
homeomorphism $f$ in $\T$ with rational rotation number has
periodic points. However, if $f$ has irrational rotation number
$\rho(f)=\bar\tau \notin \Q$, then:

\begin{itemize}
\item[(a)] $\omega(\bar\theta)$ is independent of $\bar\theta$.
\item[(b)] $f$ is semi-conjugate to the rigid rotation map
$R_{\bar \tau}$. The semi-conjugacy takes the orbits of $f$ to orbits
of $R_{\bar \tau}$, is at most two to one on $\omega(\bar\theta)$ and
preserves orientation. \item[(c)] If $\omega(\bar\theta)=\T$, then
$f$ is conjugate to $R_{\bar \tau}$ and the minimal set of $f$ is the
whole circle $\T$. \item[(d)] If $\omega(\bar\theta)\neq\T$, then
the semi-conjugacy from $f$ to $R_{\bar \tau}$ collapses the closure of
each open interval in the complement of $\omega(\bar\theta)$ to a
point. Moreover the only minimal set of $f$ is a Cantor set $C$ in
$\T$.

\end{itemize}

An orientation preserving homeomorphism $f: \T \to \T$ is said to
be a Denjoy map if $f$ has an irrational rotation number $\bar
\tau$ and $f$ is not conjugated to any rotation. In that case, $f$
admits a Cantor minimal set $C_f$ that attracts all orbits in the
future and in the past ---and every point in $C_f$ is a recurrent
point of $f$. We can associate to $C_f$ a Cantor function $\cal P$
which is unique up to rotations and such that $\mathcal{P} \circ f
= R_{\bar \tau} \circ \mathcal{P}$. So $\mathcal{P}$ is a
semi-conjugacy from $f$ to $R_{\bar \tau}$.



\medskip

The construction in \cite{robinson} of a Denjoy map with
irrational rotation number $\bar \tau$ consists of choosing a
point $\bar \theta \in \T$ and determining a family of pairwise
disjoint open arcs in $\T$ with decreasing lengths whose sum is
one and the complement of the union of all of them is a Cantor
set. Each arc is identified with an element of the orbit of $\bar
\theta$ via $R_{\bar \tau}$ which is always dense in $\T$. They also
are put in the same order as the elements of the orbit, that is,
preserving the cyclic order. These open intervals correspond to
the gaps of the Cantor set and the union of the two extremes of
all the intervals is the accessible set $A$ of $C_f$. Next step is
to define $f$ on the union of the intervals and then extend the
map to the closure.
\medskip

But it is also possible to generate a Denjoy map considering the $R_{\bar \tau}-$orbit
orbit of more than one point. Markley proved in \cite{Markley}
that given an irrational number $\tau \notin \Q$ and a countable
set $D\neq \emptyset$ in $\T$ such that $R_{\bar \tau} D=D$, there
exists a Denjoy map with rotation number $\bar \tau$ and minimal
Cantor set with Cantor function verifying $\mathcal{P}(A)=D$ and
being unique up to rotations. For instance, the construction in
\cite{robinson} corresponds to the countable set $\mathcal{P}(A)$
composed by the orbit of a unique point $\overline{\varphi}\in \T$
by the rotation $R_{\bar \tau}$, say
$$\mathcal{P}(A)=\{\overline{\varphi+ n\tau}: n\in \Z\}$$

Figure \ref{figuracantorRafa} illustrates the construction of a
Denjoy map considering the orbit of two different points, which is
well explained in \cite{irrationalRotNumer}. The corresponding
countable set is  $$\mathcal{P}(A)=\{\overline{\varphi + n\tau}:
n\in \Z\} \cup \{\overline{\psi + n\tau}: n\in \Z\}.$$

\begin{figure}[hh]
\centering
\includegraphics[scale=0.4]{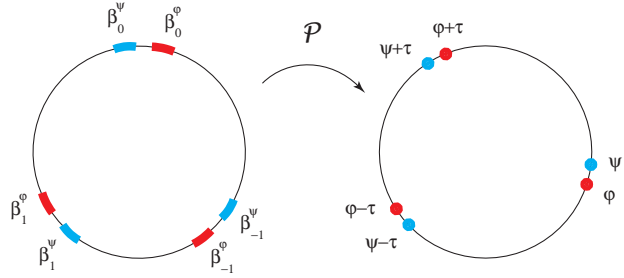}
\caption{Construction of a Cantor set with two orbits.}  \label{figuracantorRafa}
\end{figure}

%

\section{$\Z_m-$equivariant Denjoy maps in the circle} \label{secZm}

Observe that the construction in Figure \ref{figuracantorRafa}
depends on the points $\bar \varphi, \bar \psi \in \T$. Since it
can be made with every two points, in this section we consider
$\bar \psi= R_{\frac{1}{2}}\bar \varphi$ in order to look for any symmetry
of the Denjoy map (see Figure \ref{figuracantorZ2}). This
motivated us to study the more general case when the countable set
$D$ is the union of the orbits of points which are the rational
rotation $R_{\frac{k}{m}}$ of a given point $\bar\varphi \in
\T$ for same $k=0,...,m-1$. That is, given a point $\bar \varphi
\in \T$ and numbers $\tau \notin \Q$, $m\in \N$ we consider the
set
$$\mathcal{P}(A)=\bigcup_{k=0}^{m-1} \{\overline{\varphi^k+n\tau}:
n\in \Z\}$$ where   $\bar \varphi^k=R_{\frac{k}{m}} \bar
\varphi$,  for $k=0,...,m-1$. See Figure \ref{figuracantorZm}.
\medskip
\begin{figure}[hh]
\centering
\includegraphics[scale=0.5]{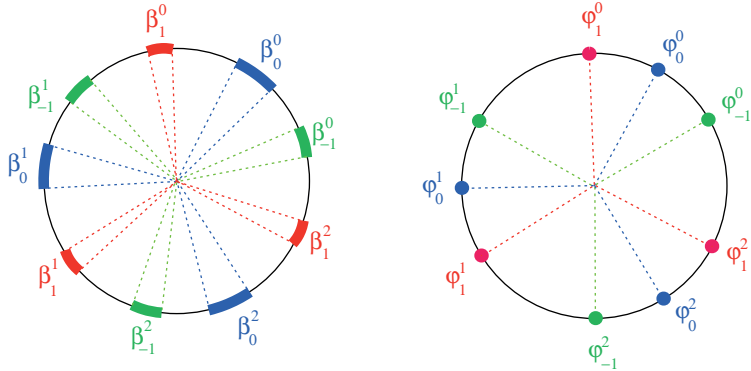}
\caption{Construction of a Cantor set with $3$ symmetric points.}
\label{figuracantorZm}
\end{figure}



Since $\Z_m=\{R_{\frac{k}{m}}\}_{k=0}^{m-1}$, given a point
$\bar \varphi \in \T$ we define the set $\{\bar
\varphi^k\}_{k=0}^{m-1}$ as the\textit{ orbit of the group} $\Z_m$
(or $\Z_m-$orbit) of $\bar \varphi$, where $\bar \varphi
^k=R_{\frac{k}{m}} \bar \varphi$.


\medskip

In addition, $\Z_m$ is a cyclic compact Lie group generated by
$R_{\frac{1}{m}}$. So, in order to stay that a map $f$ is
$\Z_m-$equivariant, we only need to prove that
$$f(\bar\varphi+\frac{1}{m})=f(\bar \varphi)+ \frac{1}{m}.$$ See
\cite{golu2} for more details.
\medskip

In this section we prove the existence of $\Z_m-$equivariant
Denjoy maps. For simplicity, details of the proof will be explained only in case $m=2$ because the case $m>2$
is analogous. Firstly we construct a Cantor set which
is invariant under the rotation $R_{\frac{1}{2}}$. Secondly we prove the
existence of $\Z_2-$equivariant Denjoy maps in the circle with
the constructed Cantor set as its minimal set. Finally, we give
the keys of the proof in case $m>2$.
\medskip


\begin{lemma}[Herman {\cite{Herman}, p. 140}] \label{lemHerman} Let $D_1$, $D_2$ be two
dense subsets in $\R$, and $\phi:D_1 \to D_2$ be a strictly
increasing map which is onto. Then $\phi$ can be extended to a
monotone strictly increasing continuous map from $\R$ to $\R$.
\end{lemma}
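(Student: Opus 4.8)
The plan is to define the extension by an explicit supremum formula and then verify each required property in turn, with continuity being the step that genuinely uses surjectivity onto the dense set $D_2$.

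First I would set, for every $x \in \R$,
$$\Phi(x) = \sup\{\phi(d) : d \in D_1,\ d \le x\}.$$
Before anything else I must check this is well defined and finite. Since $D_1$ is dense there is some $d \in D_1$ with $d < x$, so the set is nonempty; and there is some $d' \in D_1$ with $d' > x$, whence $\phi(d) < \phi(d')$ for every admissible $d$ by monotonicity of $\phi$, so the set is bounded above by $\phi(d')$. Thus $\Phi(x) \in \R$. If $x \in D_1$ then $d = x$ is itself admissible and dominates all the other terms, so $\Phi(x) = \phi(x)$; hence $\Phi$ extends $\phi$.

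Next I would record monotonicity: if $x \le y$ the defining set for $x$ is contained in that for $y$, so $\Phi(x) \le \Phi(y)$. To upgrade this to strict monotonicity I use the density of $D_1$ once more: given $x < y$, choose $d_1, d_2 \in D_1$ with $x < d_1 < d_2 < y$; then $\Phi(x) \le \phi(d_1) < \phi(d_2) \le \Phi(y)$, the outer inequalities holding because $d_1$ exceeds every competitor in the supremum defining $\Phi(x)$ while $d_2$ is itself a competitor for $\Phi(y)$.

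The main point, and the one genuine obstacle, is continuity. A non-decreasing function on $\R$ can fail to be continuous only through a jump, so suppose $\Phi$ jumped at some $x_0$, i.e. $\Phi(x_0^-) < \Phi(x_0^+)$. Any value $v$ strictly between these one-sided limits, other than $\Phi(x_0)$ itself, is then omitted by $\Phi$: points to the left of $x_0$ are sent to values $\le \Phi(x_0^-) < v$ and points to the right to values $\ge \Phi(x_0^+) > v$. This would leave a nonempty open interval disjoint from the range of $\Phi$. But the range contains $\Phi(D_1) = \phi(D_1) = D_2$, which is dense in $\R$ precisely because $\phi$ is onto $D_2$ and $D_2$ is dense; a dense set meets every nonempty open interval, a contradiction. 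Hence $\Phi$ has no jumps and is continuous. Everything before this is bookkeeping with the density of $D_1$; it is exactly the surjectivity of $\phi$ that forbids gaps in the range and thereby forces continuity, completing the extension.
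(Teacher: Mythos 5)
Your proof is correct. Note that the paper itself gives no argument for this statement: it is quoted as a known result from Herman (with a page reference), so there is no ``paper proof'' to compare against. Your supremum construction $\Phi(x)=\sup\{\phi(d): d\in D_1,\ d\le x\}$ is the standard way to establish it, and each step is sound: density of $D_1$ gives well-definedness and, via the interpolated points $x<d_1<d_2<y$, strict monotonicity; and you correctly isolate the one place where surjectivity onto the dense set $D_2$ is indispensable, namely ruling out jump discontinuities, since a jump would leave a nonempty open interval disjoint from the range while the range contains the dense set $D_2=\phi(D_1)$. Nothing is missing.
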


\begin{lemma} \label{propCantor2} Let  $\tau \notin \Q$. Given a point $\bar \varphi$
in the circle, there exists a
Cantor set $C$ such that $R_{\frac{1}{2}} C=C$ and the associated Cantor
function $\mathcal{P}:\T \to \T$ verifies:

\begin{itemize} \item[(a)] $\mathcal{P}(A)=\{\overline{\varphi+n \tau} / n\in \Z \} \cup
\{\overline{\varphi'+n \tau} / n\in \Z \} \subset \T$, where $A$ is the
accessible set of $C$ and $\overline{\varphi}'=R_{\frac{1}{2}}
\overline{\varphi}$

\item[(b)] $\mathcal{P}$ is $\Z_2-$equivariant.
\end{itemize}
\end{lemma}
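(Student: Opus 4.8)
The plan is to realize the prescribed set $D := \{\overline{\varphi + n\tau} : n \in \Z\} \cup \{\overline{\varphi' + n\tau} : n \in \Z\}$ as the image $\mathcal{P}(A)$ of the accessible set of an explicitly built, half-rotation symmetric Cantor set, following the Cantor-function construction of \cite{robinson} and \cite{Markley} but choosing the gap data so as to be invariant under $R_{\frac{1}{2}}$. First I would record the elementary properties of $D$: since $\tau \notin \Q$ each rotation orbit is dense, so $D$ is countable and dense in $\T$, it is invariant under $R_{\bar\tau}$, and because $\overline{\varphi}' = R_{\frac{1}{2}}\overline{\varphi} = \overline{\varphi + \frac12}$ one has $R_{\frac{1}{2}}\overline{\varphi + n\tau} = \overline{\varphi' + n\tau}$ and $R_{\frac{1}{2}}\overline{\varphi' + n\tau} = \overline{\varphi + n\tau}$, so $R_{\frac{1}{2}}$ swaps the two orbits and $R_{\frac{1}{2}} D = D$. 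As $\tau \notin \Q$ the two orbits are disjoint and $R_{\frac{1}{2}}$ is a fixed-point-free involution of $D$ pairing $d$ with $d + \frac12$. Thus $D$ is an admissible input for the construction and is symmetric under the generator of $\Z_2$.

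Next I would attach gap lengths to the points of $D$ in an $R_{\frac{1}{2}}$-symmetric way. Partitioning $D$ into the pairs $\{d, d+\frac12\}$, I choose positive numbers $\ell_d$ with $\ell_d = \ell_{d + \frac12}$ for every $d$, with $\ell_d \to 0$ along an enumeration of the pairs, normalized by $\sum_{d \in D} \ell_d = 1$. Placing, in the cyclic order inherited from $D$, pairwise disjoint closed arcs $\alpha_d \subset \T$ of length $\ell_d$ --- concretely with left endpoint $g(d)$ equal to the cumulative length $\sum_{d' \prec d} \ell_{d'}$ of the gaps preceding $d$, measured from a fixed base point --- yields arcs of total length one whose complement $C = \T \setminus \bigcup_{d} \dot\alpha_d$ is compact, perfect and totally disconnected, i.e. a Cantor set, with accessible set $A$ the family of endpoints of the $\alpha_d$. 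Collapsing each $\alpha_d$ to the point $d$ produces the associated continuous monotone Cantor function $\mathcal{P}$ with $\mathcal{P}(A) = D$, which is exactly statement (a).

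The crux is to verify that this placement is genuinely $R_{\frac{1}{2}}$-equivariant, and this is where the symmetric choice of lengths pays off. The key computation is $g(d + \tfrac12) - g(d) = \tfrac12$: the total length of the gaps lying in any half-circle arc $\widehat{d\,(d+\frac12)}$ equals $\frac12$, because the involution $d' \mapsto d' + \frac12$ carries the points of $D$ in one half-circle bijectively onto those in the complementary half-circle and preserves the assigned lengths, so the two halves carry equal total gap length summing to $1$. Together with $\ell_{d+\frac12} = \ell_d$ this gives $R_{\frac{1}{2}}\alpha_d = \alpha_{d + \frac12}$, hence $R_{\frac{1}{2}} C = C$. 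For the equivariance of $\mathcal{P}$, note that for $x \in \alpha_d$ we have $R_{\frac{1}{2}} x \in \alpha_{d + \frac12}$, so $\mathcal{P}(R_{\frac{1}{2}} x) = \overline{d + \tfrac12} = R_{\frac{1}{2}}\mathcal{P}(x)$; since the gaps are dense and $\mathcal{P}$ is continuous, $\mathcal{P}\circ R_{\frac{1}{2}} = R_{\frac{1}{2}} \circ \mathcal{P}$ on all of $\T$. As $\Z_2$ is generated by $R_{\frac{1}{2}}$, this is precisely the $\Z_2$-equivariance of (b).

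I expect the main obstacle to be exactly this equivariant placement, i.e. establishing $g(d + \frac12) = g(d) + \frac12$; everything else is the standard Cantor-function machinery of \cite{robinson} and \cite{Markley}. Two routine points remain to be dispatched: that $\mathcal{P}$ is well defined and continuous from the gap data (the half-rotation symmetry does not interfere with this, and the choice of base point is immaterial, since changing it only rotates $C$, and rotations commute with $R_{\frac{1}{2}}$ so that symmetry is preserved), and that the resulting $\mathcal{P}$ is the Cantor function of $C$ in the sense fixed above. Finally, the case $m > 2$ is entirely analogous: one replaces $R_{\frac{1}{2}}$ by $R_{\frac{1}{m}}$, imposes $\ell_d = \ell_{R_{\frac{k}{m}} d}$ for all $k$, and the same averaging argument shows that each of the $m$ equal sub-arcs carries total gap length $\frac1m$, yielding $g(R_{\frac{1}{m}} d) = g(d) + \frac1m$ and hence the $\Z_m$-equivariance.
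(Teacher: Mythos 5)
Your argument is correct, but it takes a genuinely different route from the paper. The paper constructs $C$ inductively as a nested intersection $C=\bigcap_n A_n$ of finite unions of closed arcs, removing at stage $n$ the four gaps $\beta_{\pm n},\beta'_{\pm n}$ with the symmetry imposed arc by arc ($\beta'_{\pm n}=R_{\frac{1}{2}}\beta_{\pm n}$) and with a measure-halving condition on the surviving components that forces $\mu(C)=0$ and total disconnectedness; the Cantor function is then defined on the gaps and extended by Herman's Lemma \ref{lemHerman}. You instead use the explicit cumulative-length parametrization: assign gap lengths with $\ell_d=\ell_{d+\frac{1}{2}}$ summing to $1$ and place the arcs at positions $g(d)=\sum_{d'\prec d}\ell_{d'}$. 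Your key identity $g(d+\frac{1}{2})=g(d)+\frac{1}{2}$, obtained from the involution $d'\mapsto d'+\frac{1}{2}$ exchanging the two half-circles while preserving the lengths, is correct (modulo $1$ it holds whichever half-arc contains the base point), and it delivers $R_{\frac{1}{2}}\alpha_d=\alpha_{d+\frac{1}{2}}$, hence $R_{\frac{1}{2}}C=C$ and the equivariance of $\mathcal{P}$ in one stroke. What your version buys is explicitness: the symmetry is a single averaging computation rather than an invariant carried through an induction, and $\mu(C)=0$ (hence total disconnectedness) is automatic because the gap lengths sum to $1$. What the paper's version buys is that the same inductively labelled gap system $\{\beta_n,\beta'_n\}$ is reused verbatim in Proposition \ref{propDenjoy2} to define the Denjoy map on the gaps, and Herman's lemma cleanly settles the continuity and well-definedness of $\mathcal{P}$, points which your write-up defers to standard machinery but which would need the same (routine) care.
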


\begin{proof}

Given an angle $\tau \notin \Q$ and an orbit
$\{\overline{\varphi+n \tau} / n\in \Z\}\subset \T$ we consider
the countable dense set
$$\mathcal{D}=\{\overline{\varphi+n \tau} / n\in \Z \} \cup
\{\overline{\varphi'+n \tau} / n\in \Z \} \subset \T,$$ such that
$\bar\varphi'=R_{\frac{1}{2}} \bar \varphi$.

%

\begin{figure}[hh]
\centering
\includegraphics[scale=0.5]{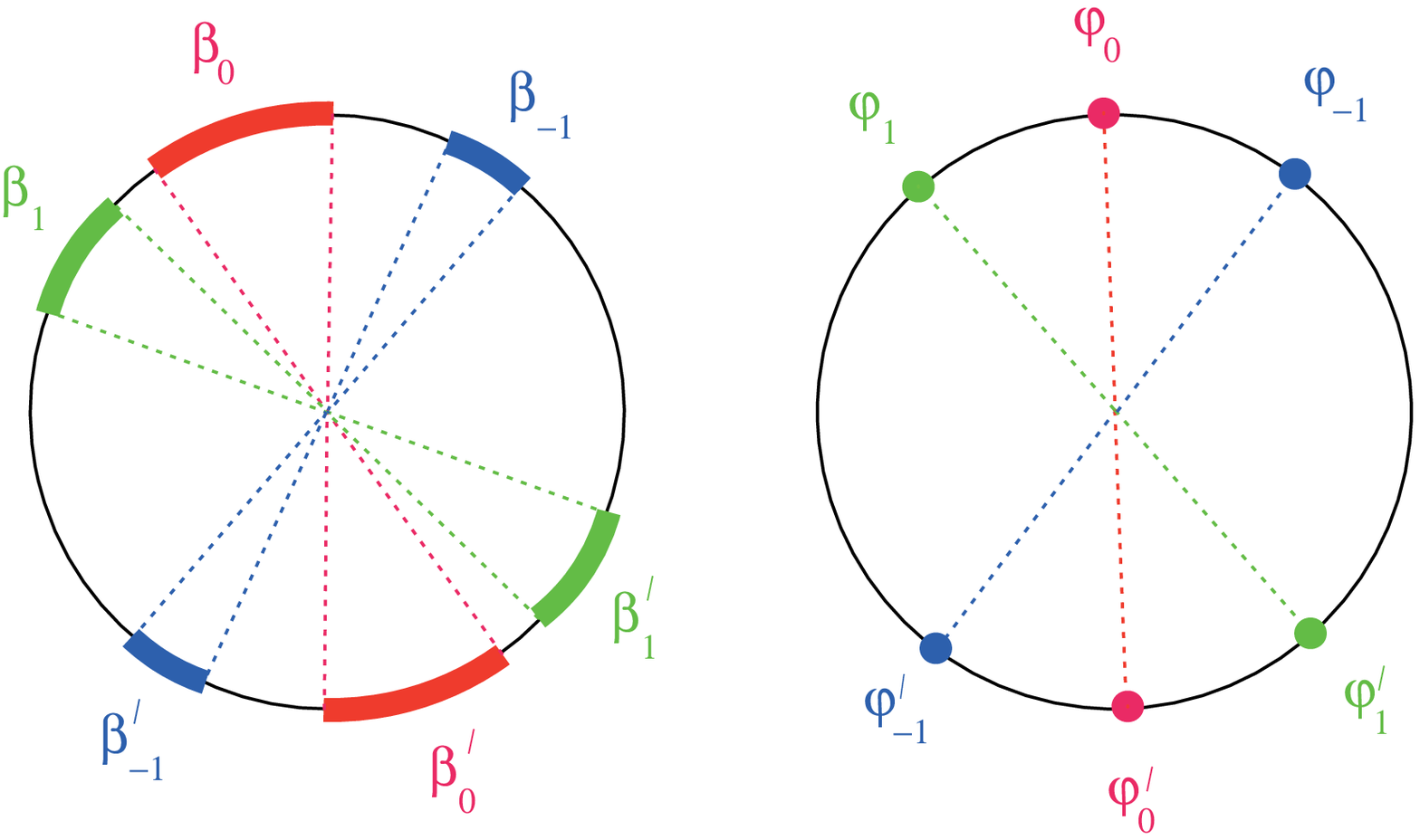}
\caption{Construction of a Cantor set with two symmetric points.}    \label{figuracantorZ2}
\end{figure}

We claim that there exists a family $\{A_n\}_{n\in \N}$ of compact
subsets of $\T$ such that $$\bigcap _{n\in \N} A_n= \T \setminus
\bigcup_{n\in \N}(\dot{\beta}_n \cup \dot{\beta'}_{n}
\cup\dot{\beta}_{-n}\cup\dot{\beta'}_{-n}),$$ where $\{\beta'_n,
\beta_n\}_{n\in \Z}$ is a family of pairwise disjoint closed arcs
in $\T$ such that $\beta'_n= R_{\frac{1}{2}} \beta_n$. See Figure
\ref{figuracantorZ2}.
\medskip

We construct the family $\{A_n\}_{n\in \N}$ by induction on $n\in
\N$. It verifies:
\medskip

\begin{itemize}
\item[(a)] $A_n=\bigcup_{i=1}^ {2k(n)} \gamma_i^ n $, where
$\gamma_i^ n=\widehat{a_i^ n b_i^ n}$ is a closed arc in $\T$ with
end points $a_i^n, b_i^n$. In addition $\gamma_i^ n \cap \gamma_j^
n=\emptyset$ for all $i\neq j$ with $i,j=1,..., 2k(n)$, where
$k(n)=2n-1$. Moreover $\gamma_{k(n)+j}^ n=R_{\frac{1}{2}} \gamma_j^ n$ for
$j=1,...,k(n)$.

\item[(b)] $\bigcup_{j=1}^n\{a_i^ {j-1}, b_i^
{j-1}\}_{i=1}^{2k(j-1)} \subset A_n$. That is, every extreme point
of each arc $\gamma_i^{k}$ of $A_k$  belongs to $A_n$, for
$k=1,..,n-1$.

\item[(c)] $A_{n} \subset A_{n-1}$.

\item[(d)] $R_{\frac{1}{2}} A_n=A_n$.

\item[(e)] The correspondence which associates to each point $\bar
\varphi_N=\overline{\varphi+N\tau}$ the arc $\beta_N$, for all
$\abs{N}\leq n-1$, preserves the cycle order. Equivalently, if
$\bar \varphi_{N_1}\prec \bar \varphi_{N_2}\prec \bar
\varphi_{N_3}$, then $\beta_{N_1}\prec \beta_{N_2}\prec
\beta_{N_3}$  for all $\abs{N_i}\leq n-1$.
\end{itemize}
\medskip

Consider $A_0=\T$. We associate to the points
$\bar\varphi_0=\overline{\varphi+0\tau}$ and $\bar
\varphi'_0=\overline{\varphi'+0\tau}$ two open arcs $\beta_0$ and
$\beta'_0=R_{\frac{1}{2}}\beta_0$, respectively, preserving the cyclic
order such that $\beta_0\cap \beta'_0 =\emptyset $ and
$\mu(\T\setminus \beta_0 \cup \beta'_0) < \frac{1}{2}
\mu(A_0)=\frac{1}{2}.$ We define
$$A_1=A_0 \setminus \beta_0 \cup \beta'_0=\gamma_1^1 \cup
\gamma_2^1,$$ where  $\gamma_1^1, \gamma_2^1$ are closed,
$\gamma_1^1 \cap \gamma_2^1=\emptyset$ and $\gamma_2^1 = R_{\frac{1}{2}}
\gamma_1^1$. Clearly, $R_{\frac{1}{2}} A_1=A_1$ and $A_1 \subset A_0$. See
Figure \ref{figuraA0A1}.

\begin{figure}[hh]
\centering
\includegraphics[scale=0.3]{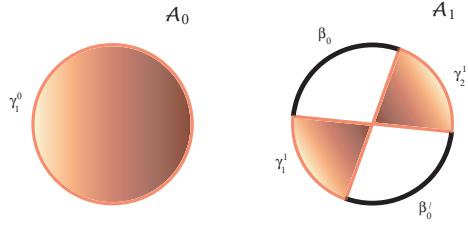}
\caption{The subsets $A_0, A_1$}    \label{figuraA0A1}
\end{figure}

\medskip

Now we associate to the points $\bar
\varphi_1=\overline{\varphi+\tau}$, $\bar
\varphi_{-1}=\overline{\varphi-\tau}$,
$\varphi'_1=\overline{\varphi'+\tau}$ and
$\varphi'_{-1}=\overline{\varphi'-\tau}$ four open arcs in $A_1$,
say $\beta_1$, $\beta_{-1}$,  $\beta'_1=R_{\frac{1}{2}}\beta_1$ and
$\beta'_{-1}=R_{\frac{1}{2}}\beta_{-1}$, respectively, preserving the
cycle order and being pairwise-disjoint such that
$$\mu(A_1\setminus \beta_1 \cup \beta'_1 \cup \beta_{-1} \cup
\beta'_{-1}) \leq \frac{1}{2} \mu(A_1).$$ We define
$$A_2=A_1\setminus \beta_1 \cup \beta'_1 \cup \beta_{-1} \cup
\beta'_{-1}= \gamma_1^2 \cup .. \cup \gamma_6^2, $$ where
$\gamma_1^2,..., \gamma_6^2$ are closed, pairwise-disjoint and
$\gamma_4^2 = R_{\frac{1}{2}} \gamma_1^2, \gamma_5^2 = R_{\frac{1}{2}} \gamma_2^2$
and $\gamma_6^2 = R_{\frac{1}{2}} \gamma_3^2$. Clearly, $R_{\frac{1}{2}} A_2=A_2$
and $\{a_1^1, b_1^1, a_2^1, b_2^1\}\subset A_2 \subset A_1$. See
Figure \ref{figuraA2}.

\begin{figure}[hh]
\centering
\includegraphics[scale=0.5]{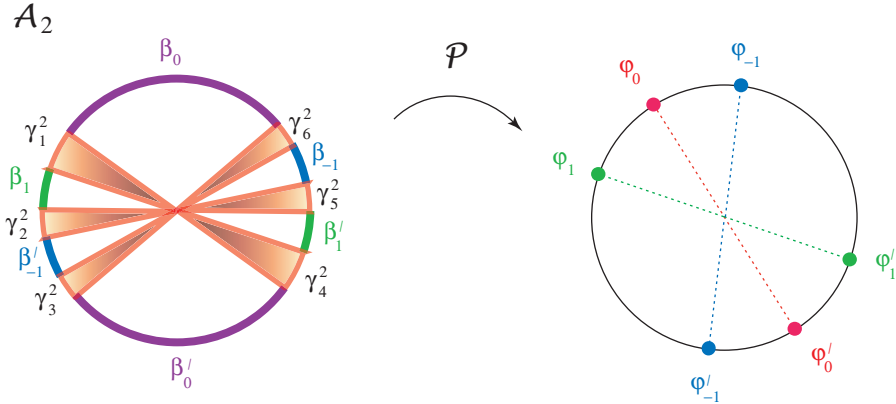}
\caption{The subset $A_2$}    \label{figuraA2}
\end{figure}

Suppose by induction that there exists $A_n$ verifying assumptions
$(a)$, $(b)$, $(c)$, $(d)$ and $(e)$.
\medskip

As $\tau \notin \Q$, the orbit of $\varphi$ is dense in $\T$ so we
can associate to the points
$\bar\varphi_n=\overline{\varphi+n\tau}$,
$\bar\varphi_{-n}=\overline{\varphi-n\tau}$,
$\bar\varphi'_n=\overline{\varphi'+n\tau}$ and
$\bar\varphi'_{-n}=\overline{\varphi'-n\tau}$ four open arcs in
$A_{n-1}$, say $\beta_n$, $\beta_{-n}$,  $\beta'_n$ and
$\beta'_{-n}$, respectively, such that

\begin{itemize}

 \item they are pairwise-disjoint and preserving the cyclic order,
 \item $\beta_{n}'=R_{\frac{1}{2}}\beta_n$ and $\beta_{-n}'=R_{\frac{1}{2}}\beta_{-n}$,
 \item if $\bar \varphi_{N_1}\prec
\bar \varphi_{N_2}\prec \bar \varphi_{N_3}$, then
$\beta_{N_1}\prec \beta_{N_2}\prec \beta_{N_3}$  for all
$\abs{N_i}\leq n$.
\end{itemize}
\medskip

In addition, the arcs verify $$\mu(\gamma_i^n\setminus \beta) \leq
\frac{1}{2}\mu(\gamma_i^n)$$ where $\beta$ can be one arc $\beta_n$,
$\beta_{-n}$,  $\beta'_n$ and $\beta'_{-n}$ or the union of two of
such arcs. The arc $\gamma_i^n$ is the component of $A_n$
containing  each $\beta$.

\medskip

Figures \ref{figuraA3C1} and  \ref{figuraA3C2} show how one component of $A_n$ could be cut by two arcs or by one. But in both cases the obtained new arcs should have less measure than the first one. That is,

\begin{itemize} \item[Case I:] Suppose the component $\gamma_i^ n$ is cut by only one arc $\beta_n$ (respectively $\beta'_{-n}$), then there appear two new closed arcs $\gamma_j^{n+1}, \gamma_{j+1}^{n+1}$ in $\gamma_i^n$ such that $$\mu(\gamma_j^{n+1} \cup \gamma_{j+i}^{n+1}) \leq \frac{1}{2}\mu(\gamma_i^n).$$
\item[Case II:] Suppose now  the component $\gamma_i^ n$ is cut by the two arcs $\beta_n$ and $\beta'_{-n}$. Then there appear three closed arcs $\gamma_j^{n+1}, \gamma_{j+1}^{n+1}, \gamma_{j+2}^{n+1}$ in $\gamma_i^n$ such that $$\mu(\gamma_j^{n+1} \cup \gamma_{j+i}^{n+1} \cup \gamma_{j+1}^{n+1}) \leq \frac{1}{2}\mu(\gamma_i^n).$$
\end{itemize}

\medskip

Observe that in the first case the arc $\gamma_{i}^{'n}=R_{\frac{1}{2}} \gamma_i^n$  has also been cut by the arcs $\beta'_n$ and $\beta_{-n}$ such a way the obtained new arcs are $\gamma_j^{'n+1}=R_{\frac{1}{2}}\gamma_j^{n+1}, \gamma_{j+1}^{'n+1}=R_{\frac{1}{2}}\gamma_{j+1}^{n+1}$ and $\gamma_{j+2}^{'n+1}=R_{\frac{1}{2}}\gamma_{j+2}^{n+1}$ all in $\gamma_i^{'n}$. The case that the arc $\gamma_i^n$ had been cut only by one arc $\beta$ is analogous.

\medskip
We define $$A_{n+1}=A_n\setminus \beta_n \cup \beta'_n \cup
\beta_{-n} \cup \beta'_{-n}= \bigcup_{i=1}^{2k(n+1)}
\gamma_i^{n+1} , $$ where $k(n)=2n-1$ and
$\{\gamma_i^{n+1}\}_{i=1}^{2k(n+1)}$ is a family of
pairwise-disjoint and closed arcs in $\T$ such that
$\gamma_{k(m+1)+i}^{n+1} = R_{\frac{1}{2}} \gamma_i^{n+1}$ for all
$i=1,..., k(n+1)$. Clearly, $R_{\frac{1}{2}} A_{n+1}=A_{n+1}$ and
$$\{a_i^1, b_i^1\}_{i=1}^{2k(1)} \cup... \cup \{a_i^n,
b_i^n\}_{i=1}^{2k(n)} \subset A_{n+1} \subset A_n\subset ...
\subset A_1,$$ that is, the two extreme points of each arc
$\gamma_i^k$ of $A_k$ belong to $A_{n+1}$ for all $k=1,...,n$.
Therefore, the family $\{A_n\}$ is well defined and $$\bigcap
_{n\in \N} A_n= \T \setminus \bigcup_{n\in \N}(\dot{\beta}_n \cup
\dot{\beta'}_{n} \cup\dot{\beta}_{-n}\cup\dot{\beta'}_{-n})$$
\medskip

\begin{figure}[hh]
\centering
\includegraphics[scale=0.5]{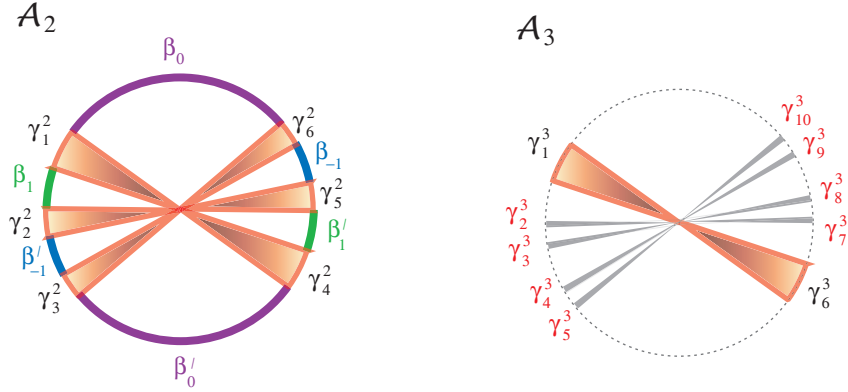}
\caption{The subset $A_3$. Case I}    \label{figuraA3C1}
\end{figure}

Now we claim that $\mu (\bigcap_{n=0}^{\infty} A_n)=0$, where
$\mu$ denotes the Lebesgue measure. By construction, for each
$m\geq 0$ there exists a natural number $\sigma(m)\geq m$ such
that $\mu(A_{\sigma(m)}) <\frac{1}{2}\mu(A_m)$. The compact set
$A_{\sigma(m)}$ is the $\sigma(m)-$th element of the family
$\{A_n\}_{n=0}^\infty$ such that every component of $A_m$ have
been cut by an arc $\beta_k$ or $\beta'_k$ with $k\in \Z$ at least
once.
\medskip

Let us consider the subfamily $\{A_{\sigma_{n}(m)}\}_{n=0}^\infty$
where the compact set $A_{\sigma_{n+1}(m)}$ is an element of
$\{A_n\}_{n=0}^\infty$ such that every component of
$A_{\sigma_{n}(m)}$ have been cut by an arc $\beta_k$ or $\beta'_k$
with $k\in \Z$ at least once. Then, $$\mu(A_{\sigma_{n+1}(m)})\leq
\frac{1}{2} \mu(A_{\sigma_{n}(m)}), \quad \forall n\geq 0$$ and
$A_{\sigma_{n+1}(m)}\subseteq A_{\sigma_{n}(m)}$ for all $n\geq
0$. So $$0\leq \mu(A_{\sigma_{n+1}(m)})\leq \frac{1}{2^n}\mu(A_m)<
\frac{1}{2^{n+1}}$$ and $$0\leq \mu(\bigcap_{n=0}^\infty A_n)\leq
\mu(\bigcap_{n=0}^\infty A_{\sigma_{n}(m)})=\lim_{n\to \infty}
\mu(A_{\sigma_{n}(m)})=0.$$

\begin{figure}[hh]
\centering
\includegraphics[scale=0.5]{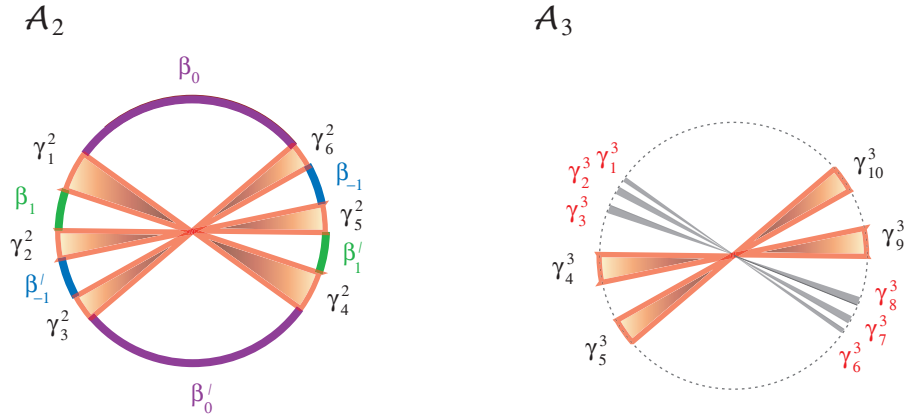}
\caption{The subset $A_3$. Case II}    \label{figuraA3C2}
\end{figure}

Now we claim that $$C=\bigcap _{n\in \N} A_n= \T \setminus
\bigcup_{n\in \N}(\dot{\beta}_n \cup \dot{\beta'}_{n}
\cup\dot{\beta}_{-n}\cup\dot{\beta'}_{-n})$$ is a Cantor set. $C$
is compact because it is an intersection of compact sets. Suppose
that there exists a connected component of $C$ different from a
singleton $\{p\}$, then there exists an arc belonging to $C$.
Therefore $\mu(\bigcap_{n\in \N} A_n) = 0$, which is
impossible. So $C$ is totally disconnected. Now  take a point
$x\in C$, then $x\in A_n$ for all $n\in \N$. In particular, for
each $n\in \N$, there exists a $i(n)-$th compact component of
$A_n$ such that $x\in \gamma_{i(n)}^n$. Let $x_n$ be an extreme of
$ \gamma_{i(n)}^n$ such that $x_n\neq x$. As $n$ increases the
component $ \gamma_{i(n)}^n$ is being cut in pieces such that the
measure of the complement of the gaps is less than the half of the
piece I have cut, so the distance between $x_n$ and $x$ is getting
smaller. That is, $ \forall \varepsilon >0$ there exists a $n_0\in
\N$ and a component $\gamma_{i(n)}^n$ such that, if $x$ is an
extreme of $\gamma_{i(n)}^n$,
 $$d(x_n,x)=L(\widehat{x_nx})= \mu(\gamma_{i(n)}^n)$$ where $L(\widehat{x_nx})$ denotes the length of the arc $\widehat{x_nx}$. Then, $$d(x_n,x)=\mu(\gamma_{i(n)}^n)<\frac{1}{2}\mu(\gamma_{i(n-1)}^{n-1})<...<\frac{1}{2^n}<\varepsilon, \quad \forall n>n_0.$$ And if $x\in \dot{\gamma}_{i(n)}^n$, then $$d(x_n,x)<d(x_n,y)<\frac{1}{2^n}<\varepsilon, \quad \forall n>n_0,$$ where $y$ is the other extreme of $\gamma_{i(n)}^n$. Then $C$ is perfect.
\medskip

Note that, by construction, $R_{\frac{1}{2}} C=C$ and the set of accessible
points of $C$ is the union of the two extremes of all components
$\gamma_i^n$ for all $n\in \N$.
\medskip

Let now to define the associated Cantor function $\mathcal{P}: \T
\to \T$ of $C$. Consider the function
$$\mathcal{P}_{*}:\bigcup_{n\in \Z}\dot{\beta}_n \cup
\dot{\beta'}_{n} \to \{\overline{\varphi+n \tau} : n\in \Z \} \cup
\{\overline{\varphi'+n \tau} : n\in \Z \}$$ such that
$\mathcal{P}_{*}(\dot{\beta}_n)=\overline{\varphi}_n$ and
$\mathcal{P}_{*}(\dot{\beta'}_n)=\overline{\varphi}'_n$, for all
$n\in \Z$. It is easy to verify by induction that it is well
defined considering the functions


$$\mathcal{P}_n: \bigcup_{i=-n}^{i=n} \dot{\beta}_i
\cup \dot{\beta'}_i \to \{\overline{\varphi +N\tau}:\abs{N}\leq
n\} \cup \{\overline{\varphi' +N\tau}:\abs{N}\leq n\}$$ such that
$\mathcal{P}_n(\dot{\beta}_N)=\overline{\varphi}_N$,
$\mathcal{P}_n(\dot{\beta'}_N)=\overline{\varphi}'_N$,
$\mathcal{P}_n(\dot{\beta}_{-N})=\overline{\varphi}_{-N}$,
$\mathcal{P}_n(\dot{\beta'}_{-N})=\overline{\varphi}'_{-N}$, for
all $\abs{N}\leq n$ and a given $n\in \N$.
\medskip

Observe that
$\mathcal{P}_{*}(\dot{\beta'}_n)=\mathcal{P}_{*}(\dot{\beta}_n)+\frac{1}{2}$
for all $n\in \Z$ by construction. This implies that
$\mathcal{P}_{*} \circ R_{\frac{1}{2}} = R_{\frac{1}{2}} \circ \mathcal{P}_{*}$ and
$\mathcal{P}_{*}$ is $\Z_2-$equivariant.
\medskip
Now we extend $\mathcal{P}_{*}$ to the function $\mathcal{P}$ at
the points in the closure of $\bigcup_{n\in \Z}\dot{\beta}_n \cup
\dot{\beta'}_{n}$ applying Lemma \ref{lemHerman}. 
\medskip

Actually, $\Omega=\bigcup_{n\in \Z}\dot{\beta}_n \cup
\dot{\beta'}_{n}$ is open and $\mu(\T \setminus \Omega)=0$, so
$\Omega$ is dense in $\T$. Note that $D=\{\overline{\varphi+n
\tau} : n\in \Z \} \cup \{\overline{\varphi'+n \tau} : n\in \Z \}$
is also dense because $\tau \in \R\setminus \Q$. Let consider
$\widehat{\Omega} \subset \R$ and $\widehat{D}\subset \R$ be the
lift of $\Omega$ and $D$, respectively. The function
$\widehat{\mathcal{P}}_{*}: \widehat{\Omega} \to \widehat{D}$ is
onto and preserves orientation, so it is strictly increasing.
By Lemma \ref{lemHerman}, $\widehat{\mathcal{P}}_{*}$ can be
extended to a continuous strictly increasing function
$\widehat{\mathcal{P}}:\R \to \R$ such that
$\widehat{\mathcal{P}}(x+1)=\widehat{\mathcal{P}}(x)+1$, for all
$x\in \R$. So $\widehat{\mathcal{P}}$ can be consider as a lift of
a cyclic order preserving and continuous function $\mathcal{P}:\T
\to \T$ such that $\mathcal{P} \circ R_{\frac{1}{2}} = R_{\frac{1}{2}} \circ
\mathcal{P}$ and $\mathcal{P}(A)=D_2$, where $A$ is the union of
the two extremes of every component $\gamma_i^n$ for all $n\in
\N$, the set of accessible points of the Cantor set $C$.
\end{proof}

\begin{proposition} \label{propDenjoy2} Let  $\tau \notin \Q$. Given a point $\bar \varphi \in \T$, there exists a
Denjoy map $f: \T \to \T$ such that:
\begin{itemize} \item[(a)] $f$ is $\Z_2-$equivariant. \item[(b)]
$\rho(f)=\bar \tau$. \item[(c)] $f$ has a minimal Cantor set as in
Lemma \ref{propCantor2}.
\end{itemize}
\end{proposition}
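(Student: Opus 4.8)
The plan is to take the Cantor set $C$, its accessible set $A$ and the $\Z_2$-equivariant Cantor function $\mathcal{P}$ produced by Lemma \ref{propCantor2}, and to build $f$ so that it realizes, at the level of the gaps, the dynamics that $R_{\bar\tau}$ induces on the labels $\mathcal{P}(A)=D$. Recall that $\mathcal{P}\circ R_{\frac{1}{2}}=R_{\frac{1}{2}}\circ\mathcal{P}$ and that $\mathcal{P}(\dot\beta_n)=\overline{\varphi}_n=\overline{\varphi+n\tau}$, $\mathcal{P}(\dot\beta'_n)=\overline{\varphi}'_n$. Since $R_{\bar\tau}(\overline{\varphi}_n)=\overline{\varphi}_{n+1}$, the required semi-conjugacy $\mathcal{P}\circ f=R_{\bar\tau}\circ\mathcal{P}$ forces $f(\beta_n)=\beta_{n+1}$ and $f(\beta'_n)=\beta'_{n+1}$ for every $n\in\Z$; this is the skeleton of the construction. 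Note that the gaps are genuinely distinct, $\beta'_n\neq\beta_m$ for all $m,n$, because $\overline{\varphi'+n\tau}=\overline{\varphi+m\tau}$ would give $\tfrac12=(m-n)\tau \pmod 1$, which is impossible for $\tau\notin\Q$.

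First I would define $f$ on the closed gaps. On each $\beta_n$ I choose an orientation-preserving homeomorphism onto $\beta_{n+1}$ carrying endpoints to the corresponding endpoints (for instance the affine map between the two lifted arcs), and then I define $f$ on the symmetric gap $\beta'_n=R_{\frac{1}{2}}\beta_n$ by $f|_{\beta'_n}:=R_{\frac{1}{2}}\circ f|_{\beta_n}\circ R_{\frac{1}{2}}$, using that $R_{\frac{1}{2}}$ is an involution. This forces $f(\beta'_n)=R_{\frac{1}{2}}f(\beta_n)=R_{\frac{1}{2}}\beta_{n+1}=\beta'_{n+1}$, so the prescription is consistent with the skeleton and is $\Z_2$-equivariant on $\Omega=\bigcup_n(\dot\beta_n\cup\dot\beta'_n)$. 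Because $n\mapsto n+1$ is a bijection of $\Z$, the map $f$ permutes the family of gaps and hence sends the dense set $\overline\Omega$ onto itself. Moreover, property (e) of Lemma \ref{propCantor2} says that the cyclic order of the arcs matches that of their labels; since $f$ shifts labels by the rotation $R_{\bar\tau}$, it preserves the cyclic order of the gaps, so a lift $\widehat f_*$ on the lift $\widehat{\overline\Omega}$ is strictly increasing.

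Next I would extend $f$ to all of $\T$. The set $\overline\Omega$ is dense (its complement $C$ has measure zero by the Lemma), $\widehat f_*$ is strictly increasing, onto a dense set, and commutes with the unit translation. By Lemma \ref{lemHerman} it extends to a continuous, strictly increasing $\widehat f:\R\to\R$ with $\widehat f(x+1)=\widehat f(x)+1$, which descends to an orientation-preserving homeomorphism $f:\T\to\T$. Here lies the main obstacle: the delicate point is not the bookkeeping on the gaps but the fact that a gap-by-gap definition glues into a genuine homeomorphism, i.e.\ continuity at the accessible and inaccessible points of $C$; this is exactly what Lemma \ref{lemHerman} delivers, its hypotheses of strict monotonicity and density being guaranteed by the order property (e) and the length control in Lemma \ref{propCantor2}. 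The equivariance survives the extension automatically: $f\circ R_{\frac{1}{2}}$ and $R_{\frac{1}{2}}\circ f$ are continuous and agree on the dense set $\overline\Omega$, hence everywhere, giving (a).

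Finally I would read off the remaining properties. The identity $\mathcal{P}\circ f=R_{\bar\tau}\circ\mathcal{P}$ holds on the dense set of gaps and endpoints by construction and, both sides being continuous, everywhere; since $\mathcal{P}$ is a degree-one monotone semi-conjugacy, $\rho(f)=\rho(R_{\bar\tau})=\bar\tau$, which is (b). As $f^n(\beta_0)=\beta_n$ are pairwise disjoint, $\beta_0$ is a wandering arc, so $f$ cannot be conjugate to a rotation; together with $\rho(f)=\bar\tau\notin\Q$ this means $f$ is a Denjoy map, and by item (d) of Section \ref{secNotation} its unique minimal set is a Cantor set, necessarily the set $C=\T\setminus\Omega$ whose gaps are wandering, giving (c). The case $m>2$ is entirely analogous: one starts from the $\Z_m$-invariant Cantor set of the corresponding version of Lemma \ref{propCantor2}, defines $f$ on one representative family of gaps, spreads it by the generator $R_{\frac{1}{m}}$, and uses the criterion $f(\bar\varphi+\tfrac{1}{m})=f(\bar\varphi)+\tfrac{1}{m}$ to conclude $\Z_m$-equivariance.
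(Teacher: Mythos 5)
Your proposal is correct and follows essentially the same route as the paper: define the map on the gaps so that $\beta_n\mapsto\beta_{n+1}$ and $\beta'_n\mapsto\beta'_{n+1}$ in a cyclic-order-preserving, $R_{\frac{1}{2}}$-equivariant way, then extend to a homeomorphism of $\T$ via Lemma \ref{lemHerman} and read off the semi-conjugacy $\mathcal{P}\circ f=R_{\bar\tau}\circ\mathcal{P}$. Your additional checks (that the gaps $\beta_n$ and $\beta'_m$ are genuinely distinct, and that a wandering gap rules out conjugacy to a rotation) are details the paper leaves implicit, not a different argument.
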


\begin{proof}

Let $\tau \notin \Q$ and consider a point $\bar \varphi \in \T$. By Lemma \ref{propCantor2},
we can construct a Cantor set
$C$ such that $R_{\frac{1}{2}} C=C$ and the associated Cantor function
$\mathcal{P}:\T \to \T$ is $Z_2-$equivariant.
\medskip

Let now to define the Denjoy homeomorphism with minimal set the
Cantor set $C$. We are going to apply Lemma \ref{lemHerman} in the
same way as the construction of $\mathcal{P}$ in Lemma
\ref{propCantor2}.
\medskip

Consider the bijection
$$f_{*}: \bigcup_{n\in \Z}\dot{\beta}_n \cup \dot{\beta'}_{n} \to
\bigcup_{n\in \Z}\dot{\beta}_n \cup \dot{\beta'}_{n}$$ verifying:

\begin{itemize}
 \item For each $n\in \Z$, $f_{*}(a_n)=a_{n+1}$, $f_{*}(b_n)=b_{n+1}$, $f_{*}(a'_n)=a'_{n+1}$ and $f_{*}(b'_n)=b'_{n+1}$. 
 \item $f_{*}\circ R_{\frac{1}{2}}=R_{\frac{1}{2}} \circ f_{*}$.
\end{itemize}

Analogously to the definition of the
Cantor function associated to $C$, it is easy to verify that
$f_{*}$ is well defined considering for each $n\in \N$ the
bijection $$f_{n}: \bigcup_{\abs{i}\leq n}\dot{\beta}_i \cup
\dot{\beta'}_{i} \to \bigcup_{\abs{i}\leq n+1}\dot{\beta}_i \cup
\dot{\beta'}_{i}$$ verifying:

\begin{itemize}
 \item $f_{n}(a_i)=a_{i+1}$, $f_{n}(b_i)=b_{i+1}$, 
$f_{n}(a'_i)=a'_{i+1}$ and $f_{n}(b'_i)=b'_{i+1}$.
 \item $f_{n}\circ R_{\frac{1}{2}}=R_{\frac{1}{2}} \circ f_{n}$.
\end{itemize}


Let $\Omega=\bigcup_{n\in \Z}\dot{\beta}_n \cup \dot{\beta'}_{n}$. Consider $widehat{\Omega}$ and 
$\widehat{f_{*}}: \widehat{\Omega} \to
\widehat{\Omega}$ being the lift of $\Omega$ and $f_{*}$, respectively. As $f_{*}$ is a
bijection and preserves the cyclic order, $\widehat{f_{*}}$ is a
strictly increasing bijection. Then, by Lemma \ref{lemHerman}, it
can be extended to a continuous orientation preserving bijection
$\widehat{f}: \R \to \R$ such that
$\widehat{f}(x+1)=\widehat{f}(x)+1$ for all $x\in \R$. Moreover
$\widehat{f}\circ R_{\frac{1}{2}}= R_{\frac{1}{2}} \circ \widehat{f}$ and, as every
continuous and one-to-one map in $\R$ is open, $\widehat{f}$ is an
orientation preserving homeomorphism in $\R$. Therefore
$\widehat{f}$ can be consider as a lift of an orientation
preserving $\Z_2-$equivariant homeomorphism $f:\T \to \T$.
\medskip

By construction, $R_{\bar \tau} \circ \mathcal{P}=\mathcal{P}
\circ f$. Therefore, $f$ is an orientation preserving
$\Z_2-$equivariant Denjoy homeomorphisms of the circle with
rotation number $\bar \tau$.
\medskip
\end{proof}

\begin{theorem} \label{teoDenjoyZm} Let $\tau \notin \Q$ and $m\in \N$. Given a point $\bar \varphi \in \T$, there exists a
$\Z_m-$equivariant Denjoy map with rotation number $\bar \tau$.
\end{theorem}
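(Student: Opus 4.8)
The plan is to repeat the two-step construction of Lemma \ref{propCantor2} and Proposition \ref{propDenjoy2}, replacing the involution $R_{\frac{1}{2}}$ by the order-$m$ generator $R_{\frac{1}{m}}$ of $\Z_m$ and the pair of orbits by the $m$ orbits attached to the $\Z_m$-orbit $\{\bar\varphi^k\}_{k=0}^{m-1}$ with $\bar\varphi^k = R_{\frac{k}{m}}\bar\varphi$. First I would fix $\tau\notin\Q$ and form the countable dense set $\mathcal{D} = \bigcup_{k=0}^{m-1}\{\overline{\varphi^k + n\tau}: n\in\Z\}$, which satisfies $R_{\frac{1}{m}}\mathcal{D} = \mathcal{D}$ because $R_{\frac{1}{m}}$ cyclically permutes the $m$ orbits. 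I would then build a decreasing family $\{A_n\}_{n\in\N}$ of compact sets exactly as before, but now each $A_n$ is a union of arcs splitting into $m$ blocks cyclically permuted by $R_{\frac{1}{m}}$, so that $R_{\frac{1}{m}}A_n = A_n$ at every stage. At step $n$, instead of removing the four arcs $\beta_{\pm n}, \beta'_{\pm n}$, I would remove $2m$ open arcs: for each of the two new fundamental orbit points $\overline{\varphi^0 \pm n\tau}$ I choose one arc $\beta^0_{\pm n}$ and adjoin its $m-1$ images $\beta^k_{\pm n} = R_{\frac{k}{m}}\beta^0_{\pm n}$, assigned to the points $\overline{\varphi^k\pm n\tau} = \overline{(\varphi^0\pm n\tau)+ \frac{k}{m}}$. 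These arcs are chosen pairwise disjoint, cyclic-order preserving relative to the orbit points, and so that every component they meet loses at least half of its measure.

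The verification that $C = \bigcap_{n}A_n$ is a Cantor set with $R_{\frac{1}{m}}C = C$ is then identical to Lemma \ref{propCantor2}: compactness from the intersection, total disconnectedness from $\mu(C)=0$ (the same halving estimate $\mu(A_{\sigma_{n+1}(m)})\le \frac{1}{2}\mu(A_{\sigma_n(m)})$ applies component by component), and perfectness from the endpoints of the cut arcs accumulating onto each point of $C$. For the Cantor function I would define $\mathcal{P}_*$ on the union $\Omega$ of the open arcs by $\mathcal{P}_*(\dot\beta^k_n) = \overline{\varphi^k + n\tau}$. The crucial symmetry identity is $\mathcal{P}_*(\dot\beta^k_n) = \mathcal{P}_*(\dot\beta^0_n) + \frac{k}{m}$, which holds by construction since $\beta^k_n = R_{\frac{k}{m}}\beta^0_n$ and $\overline{\varphi^k+n\tau} = \overline{\varphi^0 + n\tau}+\frac{k}{m}$; this yields $\mathcal{P}_*\circ R_{\frac{1}{m}} = R_{\frac{1}{m}}\circ\mathcal{P}_*$. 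Lifting $\mathcal{P}_*$ to a strictly increasing surjection between the (dense) lifts of $\Omega$ and $\mathcal{D}$ and applying Lemma \ref{lemHerman}, I extend it to a lift $\widehat{\mathcal{P}}$ with $\widehat{\mathcal{P}}(x+1)=\widehat{\mathcal{P}}(x)+1$, descending to a cyclic-order-preserving $\mathcal{P}:\T\to\T$ with $\mathcal{P}\circ R_{\frac{1}{m}} = R_{\frac{1}{m}}\circ\mathcal{P}$ and $\mathcal{P}(A) = \mathcal{D}$.

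For the map itself I would mimic Proposition \ref{propDenjoy2}: define a bijection $f_*$ of $\Omega$ onto itself by shifting the endpoint labels, $a^k_n\mapsto a^k_{n+1}$ and $b^k_n\mapsto b^k_{n+1}$ for all $k,n$, arranged to commute with $R_{\frac{1}{m}}$, which is well defined by the same finite-stage induction. Lifting and applying Lemma \ref{lemHerman} produces an orientation-preserving homeomorphism $f:\T\to\T$ with $\widehat f(x+1)=\widehat f(x)+1$ and $\widehat f\circ R_{\frac{1}{m}} = R_{\frac{1}{m}}\circ\widehat f$. By construction $R_{\bar\tau}\circ\mathcal{P} = \mathcal{P}\circ f$, so $f$ has irrational rotation number $\bar\tau$ and minimal Cantor set $C$, i.e. $f$ is a Denjoy map. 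Finally, since $f$ commutes with the generator $R_{\frac{1}{m}}$ of $\Z_m$, the criterion recalled above (it suffices to check $f(\bar\varphi + \frac{1}{m}) = f(\bar\varphi) + \frac{1}{m}$) gives that $f$ is $\Z_m$-equivariant.

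The step I expect to be the genuine obstacle is the inductive placement of the $2m$ symmetric arcs at each stage: one must simultaneously respect the cyclic order dictated by the points $\overline{\varphi^k\pm n\tau}$, keep all arcs disjoint from those removed earlier, and preserve the $R_{\frac{1}{m}}$-equivariance of the whole configuration, while guaranteeing that each affected component is at least halved in measure. For $m>2$ the rotation $R_{\frac{1}{m}}$ is no longer an involution, so the bookkeeping of which of the $m$ rotated copies of a component is cut, together with the case analysis generalizing Cases I and II, is more delicate than in the $\Z_2$ setting; everything else is a routine transcription of the $m=2$ argument.
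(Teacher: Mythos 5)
Your proposal is correct and follows essentially the same route as the paper: the paper's proof of Theorem \ref{teoDenjoyZm} likewise transcribes the $m=2$ construction, removing at stage $n$ the $2m$ arcs $\beta^k_{\pm n}=R_{\frac{k}{m}}\beta^0_{\pm n}$ to get an $R_{\frac{1}{m}}$-invariant Cantor set, establishing the identity $\mathcal{P}_{*}(\beta^k_n)=\mathcal{P}_{*}(\beta^0_n)+\frac{k}{m}$, and extending both $\mathcal{P}_{*}$ and the index-shift bijection $f_{*}$ via Lemma \ref{lemHerman}. The only difference is one of emphasis: you flag the equivariant placement of the $2m$ arcs as the delicate step, whereas the paper treats it as routine and analogous to Cases I and II of the $m=2$ argument.
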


\begin{proof} The case $m=2$ is given in Proposition \ref{propDenjoy2}. The case $m\geq  3$ is analogous considering the dense and countable set $$D=\bigcup_{k=0}^{m-1} \{\overline{\varphi^k+n\tau}: n\in \Z\}$$

If $A_0= \T$, we may associate to
$\{\bar{ \varphi}_0^k \}_{k=0}^{m-1}$, the orbit of $\bar
\varphi_0=\bar \varphi$, the family of pairwise-disjoint open arcs
$\{\beta_0^k\}_{k=0}^{m-1}$ such that
$\mu(\bigcup_{k=0}^{m-1}\beta_0^{k})< \frac{1}{2}
\mu(A_0)=\frac{1}{2}$ and define $A_1=A_0\setminus
\bigcup_{k=0}^{m-1}\beta_0^{k}$.
\medskip

Analogously to case $m=2$ we can define by induction the sets
$$A_{n+1}=A_n\setminus \bigcup_{k=0}^{m-1} \beta_n^k \cup
\beta_{-n}^k, \quad \forall n\in \N$$ such that
\medskip

\begin{itemize}
\item[(a)] $A_n=\bigcup_{i=1}^ {mk(n)} \gamma_i^ {n} $, where
$\gamma_i^ n=\widehat{a_i^ n b_i^ n}$ is closed and $\gamma_i^ n
\cap \gamma_j^ n=\emptyset$ for all $i\neq j$ and $i,j=1,...,
mk(n)$. Moreover $k(n)=2n-1$ and $\gamma_{i+mj}^
n=R_\frac{j}{m} \gamma_i^ n$ for $j=1,...,m-1$ and
$i=1,...,k(n)$.

\item[(b)] $\bigcup_{j=1}^n\{a_i^ {j-1}, b_i^
{j-1}\}_{i=1}^{mk(j-1)} \subset A_n$. That is, every extreme point
of each arc $\gamma_i^{k}$ of $A_k$  belongs to $A_n$, for
$k=1,..,n-1$.

\item[(c)] $A_{n} \subset A_{n-1}$.

\item[(d)] $R_{\frac{1}{m}} A_n=A_n$.

\item[(e)] The correspondence which associates to each point $\bar
\varphi_N=\overline{\varphi+N\tau}$ the arc $\beta_N$, for all
$\abs{N}\leq n-1$, preserves the cycle order. Equivalently, if
$\bar \varphi_{N_1}\prec \bar \varphi_{N_2}\prec \bar
\varphi_{N_3}$, then $\beta_{N_1}\prec \beta_{N_2}\prec
\beta_{N_3}$  for all $\abs{N_i}\leq n-1$.

\end{itemize}
\medskip

Then, the set $$C=\bigcap_{n\in \N} A_n=\T\setminus \bigcup_{n\in
\N}(\bigcup_{k=0}^{m-1} \beta_n^k \cup \beta_{-n}^k)$$ is a Cantor
set such that $R_{\frac{1}{m}}C=C$ and the set of accessible
points of $C$ is the union of the two extremes of all components
$\gamma_i^n$ for all $n\in \N$.
\medskip

If we consider the functions $$\mathcal{P}_{*}: \bigcup_{n\in \Z}
(\bigcup_{k=0}^{m-1} \dot \beta_n^k ) \longrightarrow
\bigcup_{k=0}^{m-1} \{\overline{\varphi^k+n\tau} : n\in \Z\}$$
such that $\mathcal{P}_{*}(\dot \beta_n^k)=\bar \varphi_n^k$ for
all $k=0,...,m-1$, $n\in \Z$, where $\bar
\varphi_n^k=\overline{\varphi^k+n\tau}$. Analogously to case
$m=2$, $\mathcal{P}_{*}$ is well defined and for all
$k=0,...,m-1$,
$$\mathcal{P}_{*}(\beta_n^k)=\mathcal{P}_{*}(\beta_n^0)+\frac{k}{m}.$$

By Lemma \ref{lemHerman} we can extend $\mathcal{P}_{*}$ to a
function $\mathcal{P}: \T \to \T$ which is the associated Cantor
function to $C$ such that $\mathcal{P}(A)=D_m$ and it is
$\Z_m-$equivariant.
\medskip

Now consider the bijection
$$f_{*}:\bigcup_{n\in \Z} (\bigcup_{k=0}^{m-1} \dot \beta_n^k )
\longrightarrow \bigcup_{n\in \Z} (\bigcup_{k=0}^{m-1} \dot
\beta_n^k )$$ verifying:

\begin{itemize}
 \item For each $n\in \Z$ and $k=0,...,m-1$, $f_{*}(a_n^k)=a_{n+1}^k$ and
$f_{*}(b_n^k)=b_{n+1}^k$, where $a_n^k, b_n^k$ and $a_{n+1}^k, b_{n+1}^k$ are the two
extremes of $\beta_n^k$ and $\beta_{n+1}^k$, respectively.
 \item $f_{*}\circ R_{\frac{1}{m}}=R_{\frac{1}{m}} \circ f_{*}$.
\end{itemize}

Analogously as in Proposition \ref{propDenjoy2} $f_{*}$ is well
defined. Moreover, by Lemma \ref{lemHerman}, we can extend $f_{*}$ to a function $f:\T \to \T$ such that:

\begin{itemize} \item[(a)] $f$ is $\Z_m-$equivariant.
\item[(b)] $\rho(f)=\bar \tau$. \item[(c)] $f$ has a minimal
Cantor set.
\end{itemize}
\end{proof}

Observe that the construction of the $\Z_m-$equivariant Denjoy depend on the given point $\bar \varphi$. As different points in $\T$ define different Cantor sets we have that the construction of the $\Z_m-$equivariant Denjoy maps of Theorem \ref{teoDenjoyZm} is not unique.
%


%

\section{Main result} \label{secZnR2}

This work is motivated by the study of the global dynamics of an
equivariant planar map with an asymptotically stable fixed point.
Authors in \cite{ACL1} prove that the symmetry forces the
existence of a global attractor in all cases except $\Z_m$.
Moreover, they give in \cite{ACL2} a family of $\Z_m-$equivariant
homeomorphisms with an asymptotically stable fixed point and rotation
number $1/m$. That paper raises the question of whether there are
some relationship between the rotation number and the order of the
group $\Z_m$. In this section we construct $\Z_m-$equivariant
homeomorphisms with an asymptotically fixed point and irrational
rotation number. Consequently, there are no linkages between
rotation number and the order of the group. Hence, symmetry
properties does not give any extra information about global
dynamics in the case $\Z_m$.
\medskip

A dissipative and orientation preserving homeomorphism of the
plane $h:\R^2 \to \R^2$ is said to be an $U-$admissible (or
admissible) map provided that has an asymptotically stable fixed
point with proper and unbounded basis of attraction $U\in \R^2$.
Note that the proper condition follows when the fixed point is not
a global attractor. We can obtain automatically the unboundedness
condition if we suppose that $h$ is area contracting.
\medskip

It is well studied in \cite{pommerenke} the Theory of Prime Ends
which states that an admissible map $h$ induces an orientation
preserving homeomorphism $h^{*}:\mathbb{P} \to \mathbb{P}$ in the
space of prime ends. This topological space is homeomorphic to the
cycle, that is $\mathbb{P} \simeq \T$, and hence the rotation
number of $h^{*}$ is well defined, say $\bar \rho \in \T$. The
rotation number for an $U-$admissible orientation preserving maps
is defined by $\rho(h,U)=\bar \rho$.
\medskip

In \cite{irrationalRotNumer} the authors prove that given an
irrational number $\tau\notin \Q$ and a Denjoy homeomorphism $f:\T
\to \T$, there exists an $U-$admissible map $h:\R^2 \to \R^2$ with
rotation number $\rho(h,U)=\bar \tau$. That motivate us to prove
the existence of  $Z_m-$equivariant homeomorphisms of the plane
which induce a Denjoy map in the circle of prime ends using the
construction given in Section \ref{secZm}.
\medskip



%

\begin{proposition}[Corbato, Ortega and Ruiz del Portal, \cite{irrationalRotNumer}] \label{propRafa} Given a $\tau \in (0,1)\setminus \Q$ and a Denjoy map $f$, there exists an admissible map with rotation number $\rho(h,U)=\bar \tau$ and such that $h^{*}$ is topologically conjugate to $f$.
\end{proposition}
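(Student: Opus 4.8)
The plan is to realize the given Denjoy map $f$ as the prime-ends homeomorphism of a planar domain, while arranging the planar map to be area-contracting so that, by the remark preceding the statement, its basin is automatically proper and unbounded. Recall from Section \ref{secNotation} that $f$ carries a minimal Cantor set $C_f$ with accessible set $A$ and a family of complementary gaps $\{\alpha_k\}$, and that the associated Cantor function satisfies $\mathcal{P}\circ f = R_{\bar\tau}\circ \mathcal{P}$. The combinatorics of how $f$ permutes the gaps $\{\alpha_k\}$ is exactly the combinatorics of the orbit $\mathcal{P}(A)$ under $R_{\bar\tau}$, and this is the data I would transplant into the plane.

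First I would fix a model for the rotation. In polar-type coordinates on a punctured simply connected domain I would write a spiral contraction $h_0$ whose restriction to the circle of prime ends is $R_{\bar\tau}$ and which attracts every orbit to the origin; taking $h_0$ area-contracting makes the basin $U_0$ proper and unbounded, and Carath\'eodory's theory gives $\rho(h_0,U_0)=\bar\tau$. This is the base onto which the Denjoy structure is grafted. Note that the round-disk model with boundary circle literally equal to $\T$ would make $h^{*}$ a genuine circle homeomorphism trivially, but it is not area-contracting; the dissipativity requirement is precisely what forces the more delicate construction and a wild boundary continuum.

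Next I would perform the surgery that converts $R_{\bar\tau}$ into $f$. Using $\mathcal{P}$, each point of the dense orbit $\mathcal{P}(A)$ is to be \emph{blown up} into an arc corresponding to a gap $\alpha_k$: concretely, I would attach to the boundary of $U_0$ a family of invariant crosscuts (or, dually, excise a family of shrinking slits), one family member per gap, permuted by the modified map $h$ exactly as $f$ permutes the $\alpha_k$. After this modification the prime-ends compactification of the new domain $U$ is a circle on which the accessible prime ends correspond to the endpoints of the gaps (the set $A$) and the inaccessible prime ends correspond to $C_f$. Since the crosscut combinatorics reproduces the gap combinatorics of $f$, the induced homeomorphism $h^{*}$ of $\mathbb{P}\simeq \T$ is topologically conjugate to $f$, whence $\rho(h,U)=\bar\tau$. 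Throughout I would keep $h$ area-contracting so that $U$ remains proper and unbounded and the origin remains asymptotically stable.

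The main obstacle is this last step: one must control the prime-ends structure finely enough that $h^{*}$ is exactly (conjugate to) $f$ rather than merely some circle homeomorphism with rotation number $\bar\tau$. This forces one to build the boundary continuum $\partial U$ by hand, with a precisely prescribed set of accessible and inaccessible prime ends, and to verify the identification through Carath\'eodory's correspondence while simultaneously maintaining that $h$ is a global orientation-preserving homeomorphism of $\R^2$ that is area-contracting. Reconciling the wild geometry that dissipativity imposes on $\partial U$ with the exact Denjoy prime-ends dynamics is the delicate heart of the argument, and is where the construction of \cite{irrationalRotNumer} does the real work.
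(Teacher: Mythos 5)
First, note that the paper does not actually prove Proposition \ref{propRafa}: it is imported from \cite{irrationalRotNumer}, and the only trace of its proof here is the explicit construction reproduced inside the proof of Theorem \ref{teoZmDenjoyR2}, namely the polar-coordinate skew product $\theta_1=f(\theta)$, $\rho_1=R(\bar\theta,\rho)$, in which the angular dynamics is \emph{literally} the given Denjoy map and the radial fibre maps are controlled by a function $\Pi$ that vanishes exactly on the minimal Cantor set $C$ and is positive on the gaps. In that model the circle $\rho=1$ is invariant over $C$ and pushed inward over the gaps, so the boundary of the basin is a comb-like continuum over $C$, and the conjugacy of $h^{*}$ with $f$ comes essentially for free from the skew-product structure. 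Your plan runs in the opposite direction: start from a rotation model and blow up the orbit $\mathcal{P}(A)$ into crosscuts. That is a genuinely different route, and it is where your argument has real gaps.

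Concretely: (1) your base model is internally inconsistent --- a map ``which attracts every orbit to the origin'' has basin $U_0=\R^2$, which is not proper, so there is no prime-ends circle to restrict to in the first place; and properness is \emph{not} a consequence of area contraction (as the paper notes, area contraction yields unboundedness of the basin, whereas properness requires the origin not to be a global attractor, i.e.\ some other invariant set must survive the construction). (2) The surgery step ``attach invariant crosscuts, one per gap, permuted as $f$ permutes the $\alpha_k$'' is the entire content of the theorem and is left undone: you must exhibit a homeomorphism of $\R^2$ for which the excised slits are invariant and form, together with a Cantor set of inaccessible points, the boundary of the new basin, and then identify the accessible and inaccessible prime ends of that domain through Carath\'eodory's correspondence. (3) Even granting the surgery, concluding that $h^{*}$ is conjugate to $f$ itself (rather than to some Denjoy map with rotation number $\bar\tau$) requires Markley's uniqueness statement quoted in Section \ref{secNotation}: a Denjoy map is determined up to rotation by the countable $R_{\bar\tau}$-invariant set $\mathcal{P}(A)$. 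You gesture at matching ``gap combinatorics'' but never invoke this, and you have not established that the constructed $h^{*}$ is a Denjoy map with the prescribed accessible set. The skew-product construction of \cite{irrationalRotNumer} sidesteps all three difficulties at once, which is why it, and not a prime-ends surgery, is the argument behind the proposition.
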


\begin{theorem} \label{teoZmDenjoyR2}Given an irrational number $\tau\notin \Q$, there exists a $\Z_m-$equivariant and admissible map in $\R^2$ with rotation number $\bar \tau \in \T$  and such that induces a Denjoy map in the circle of prime ends which is also $\Z_m-$equivariant.
\end{theorem}

\begin{proof} Let  $\tau \notin \Q$ an irrational number and $\bar \varphi \in \T$ be a point in the circle.
By Theorem \ref{teoDenjoyZm}, it is possible to construct a
Denjoy map $f:\T \to \T$ which is $\Z_m-$equivariant and has
minimal Cantor set $C$ verifying that $R_{\frac{2k\pi}{m} } C=C$,
for all $k=0,...,m-1$.
\medskip
%

By Proposition \ref{propRafa}, there exists an admissible map with
rotation number $\rho(h,U)=\bar \tau$ and such that the induced map in the space of prime ends $h^{*}$ is
topologically conjugate to $f$. Authors in \cite{irrationalRotNumer} define the homeomorphism $h$ in polar
coordinates by:

$$h: \quad \theta_1=f(\theta), \quad \rho_1=R(\theta, \rho)$$ where $R: \T \times [0, +\infty) \to [0,+\infty)$ such that
$$R (\bar \theta, \rho) = \left\{\begin{array}{l l l} \frac{1}{2} \rho & \text{if } \rho\leq \frac{1}{2} \\   (\frac{3}{4}-\Pi(\bar \theta))(2\rho-1)+ \frac{1}{4} & \text{if } \frac{1}{2}< \rho \leq 1 \\ \frac{1}{2}\rho +\frac{1}{2}-\Pi(\bar \theta) & \text{if } \rho >1 \end{array} \right. $$ and $\Pi: \T \to \R$ is such that $$\Pi (\bar \theta) = \left\{\begin{array}{ll} 0 & \text{if } \bar{\theta} \in C\\   \frac{1}{k(\abs{n}+1)} \frac{dist_\T(\bar \theta, C)}{length(\beta_{n}^k)} & \text{if } \bar{\theta} \in \beta_{n}^k \end{array} \right. $$
\medskip

We claim that $h$ is $\Z_m-$equivariant. Since  $f$ is
$\Z_m-$equivariant, we obtain that $$f(\bar \theta + \frac{k}{m})=
f(\bar \theta)+ \frac{k}{m}, \quad \forall \bar \theta \in \T$$ so
we only need to verify that $$R(\bar \theta + \frac{k}{m} )=
R(\bar \theta)$$ or equivalently, $$\Pi(\bar \theta + \frac{k}{m}
)= \Pi(\bar \theta)$$

Indeed, if $\bar \theta + \frac{k}{m} \in C$ then $\bar \theta \in C$ and
$\Pi(\bar \theta + \frac{k}{m})= \Pi(\bar \theta)$. Otherwise,
there exists an arc $\beta_n^j$ such that $\bar \theta +
\frac{k}{m} \in \beta_n^j$. In this case, $\bar \theta \in
\beta_n^{j-k}$ and $dist_\R(\bar \theta, C)= dist_\R(\bar \theta +
\frac{k}{m}, C)$, so $\Pi(\bar \theta + \frac{k}{m})= \Pi(\bar
\theta)$ and $h$ is $\Z_m-$equivariant.
\end{proof}

Observe that the $\Z_m-$equivariant and admissible map constructed
in Theorem \ref{teoZmDenjoyR2} depends on the initial point $\bar
\varphi$, so the constructed map is not unique.

%
%
%
%
%
%
\vspace{1cm}
\noindent {\bf Acknowledgements.} I wish to thank Prof. Rafael Ortega for fruitful conversations, 
several of which took place at the University of Granada, Spain, whose hospitality is gratefully acknowledged. I also
thank him for his helpful comments on preliminary versions of this paper.
\medskip


\begin{thebibliography}{ZZZZ}

\bibitem{ACL1}  B. Alarc\'on, S.B.S.D Castro and I. Labouriau. Global dynamics for symmetric planar maps, 2012. {\em Preprint} http://arxiv.org/abs/1202.0779
\bibitem{ACL2}  B. Alarc\'on, S.B.S.D Castro and I. Labouriau. A $\Z_n-$symmetric local but not global attractor, 2011. {\em
Preprint} http://arxiv.org/abs/1201.4818

\bibitem{irrationalRotNumer} L.H. Corbato, R. Ortega and F. R. Ruiz del Portal.
Attractors with irrational rotation number. {\em To appear in Math. Proc.
Cambridge Phi. Soc.}

\bibitem{topo} J.G. Hocking and G.S. Young. {\em Topology}, Addison-Wesley Series in Mathematics, London, 1961.

\bibitem{golu2} M. Golubitsky, I. Stewart and D.G. Schaeffer. Singularities and Groups in Bifurcation Theory Vol. 2. {\em Applied
Mathematical Sciences}, 69, Springer Verlag, 1985.

\bibitem{symmperspect} M. Golubitsky and I. Stewart. From Equilibrium to Chaos in Phase Space and Physical Space.
The Symmetry Perspective. Birkh\"{a}user Verlag, Basel, 2003.

\bibitem{Herman} M. R. Herman. Sur la conjugation diff\'erentiable des diff\'eomorphismes du cercle \`a des rotations. {\em Publications math\'ematiques de l'I.H.\'E.S.}, 49, 5-233, 1979.

\bibitem{Markley} N. G. Markley. Homeomorphisms of the circle without periodic points {\em Proc. London Math. Soc.} 3 (20), 688-698, 1970.

\bibitem{rationalRotNumer} R. Ortega and F. R. Ruiz del Portal.
Attractors with vanishing rotation number, {\em Journal European
Math. Soc.}, 13, 1567-1588, 2011.

\bibitem{pommerenke} Ch. Pommerenke. Boundary Behaviour of Conformal Maps. {\em Springer-Verlag Berlin Heiderberg}, 1992.

\bibitem{robinson} C. Robinson. Dynamical Systems. Stability, symbolic dynamics, and chaos. {\em Studies in Advanced Mathematics. Second Edition}. CRC Press, Boca Raton FL, 1999.

\end{thebibliography}
\end{document}